\newtheorem{thm}{Theorem}[section]
\newtheorem{prop}[thm]{Proposition}
\newtheorem{rem}[thm]{Remark}
\newtheorem{exam}[thm]{Example}
\newtheorem{defi}[thm]{Definition}
\numberwithin{equation}{section}
\begin{document}

\begin{center}
{\bf The statistically unbounded $\tau$-convergence on locally solid Riesz spaces}
\end{center}

\vspace{4mm}

\begin{center}
Abdullah AYDIN
\end{center}

\address{Department of Mathematics, Mu\c{s} Alparslan University, Mu\c{s}, Turkey.}
\email{{aaydin.aabdullah@gmail.come-mail}}

\subjclass[2010]{40A35, 46A40}
	
\keywords{Statistically $u_\tau$-convergence, statistically $u_\tau$-cauchy, locally solid Riesz space, order convergence}
	
\title{}
	
\begin{abstract}
A sequence $(x_n)$ in a locally solid Riesz space $(E,\tau)$ is said to be statistically unbounded $\tau$-convergent to $x\in E$ if, for every zero neighborhood $U$, $\frac{1}{n}\big\lvert\{k\leq n:\lvert x_k-x\rvert\wedge u\notin U\}\big\rvert\to 0$ as $n\to\infty$. In this paper, we introduce this concept and give the notions $st$-$u_\tau$-closed subset, $st$-$u_\tau$-Cauchy sequence, $st$-$u_\tau$-continuous and  $st$-$u_\tau$-complete locally solid vector lattice. Also, we give some relations between the order convergence and the $st$-$u_\tau$-convergence.
\end{abstract}
\maketitle	
	
\section{Preliminary and Introductory Facts}\label{S1}

Vector lattices and the statistical convergence are natural and efficient tools in the theory of functional analysis. A vector lattice is an ordered vector space that has many applications in measure theory, operator theory and applications in economics; see for example \cite{AB,ABPO,Za}, it was introduced by F. Riesz \cite{Riez}. On the other hand, the statistical convergence is a generalization of the ordinary convergence of a real sequence; see for example \cite{MK}. Studies related to this paper are done by Maddox \cite{M} where the statistical convergence was introduced in a topological vector space and by Albayrak and Pehlivan \cite{AP} in which the statistical convergence was introduced on locally solid vector lattice. Otherwise, the unbounded order convergence was defined for order complete Riesz spaces by H. Nakano \cite{N}. Recently, some papers have been studied on the unbounded convergence on vector lattices (see, for example [4-7]). However, as far as we know, the concept of unbounded order convergence related to the statistical convergence has not been done before. In this paper, we aim to introduce this concept on locally solid vector lattice. 

Now, let give some basic notations and terminologies that will be used in this paper. Every linear topology $\tau$ on a vector space $E$ has a base $\mathcal{N}$ for the zero neighborhoods satisfying the following four properties; for each $V\in \mathcal{N}$, we have $\lambda V\subseteq V$ for all scalar $\lvert \lambda\rvert\leq 1$; for any $V_1,V_2\in \mathcal{N}$ there is another $V\in \mathcal{N}$ such that $V\subseteq V_1\cap V_2$; for each $V\in \mathcal{N}$ there exists another $U\in \mathcal{N}$ with $U+U\subseteq V$; for any scalar $\lambda$ and each $V\in \mathcal{N}$, the set $\lambda V$ is also in $\mathcal{N}$; for much more detail see \cite{AB,ABPO}. In this article, unless otherwise, when we mention a zero neighborhood, it means that it always belongs to a base that holds the above properties. Let $E$ be a real-valued vector space. If there is an order relation "$\leq$" on $E$, i.e., it is antisymmetric, reflexive and transitive, then $E$ is called ordered vector space whenever the following conditions hold: for every $x,y\in E$ such that $x\leq y$, we have $x+z\leq y+z$ and $\alpha x\leq\alpha y$ for all $z\in E$ and $\alpha \in \mathbb{R}_+$. An ordered vector space $E$ is called Riesz space or vector lattice if, for any two vectors $x,y\in E$, the infimum $x\wedge y=\inf\{x,y\}$ and the supremum $x\vee y=\sup\{x,y\}$ exist in $E$. Let $E$ be a vector lattice. Then, for any $x\in E$, the positive part of $x$ is $x^+:=x\vee0$, the negative part of $x$ is $x^-:=(-x)\vee0$ and absolute value of $x$ is $|x|:=x\vee (-x)$. Moreover, any two elements $x, \ y$ in a vector lattice is called disjoint whenever $\lvert x\rvert\wedge\lvert y\rvert=0$. An operator $T$ between two vector lattice $E$ and $F$ is said to be a lattice homomorphism or Riesz homomorphism whenever $T(x\vee y)=T(x)\vee T(y)$ holds for all $x,y\in E$. A vector lattice is called order complete if every nonempty bounded above subset has a supremum (or, equivalently, whenever every nonempty bounded below subset has an infimum). A vector lattice is order complete iff $0\leq x_n\uparrow\leq x$ implies the existence of $\sup{x_n}$; for much more detail information, see \cite{AB,ABPO,Riez,Za}.

Recall that a subset $A$ of a vector lattice $E$ is called solid if, for each $x\in A$ and $y\in E$,  $|y|\leq|x|$ implies $y\in A$. A solid vector subspace of a vector lattice is referred to as an ideal. An order closed ideal is called a band. Let $E$ be vector lattice $E$ and $\tau$ be a linear topology on it. Then $(E,\tau)$ is said a {\em locally solid vector lattice} (or, {\em locally solid Riesz space}) if $\tau$ has a base which consists of solid sets; for more details on these notions, see \cite{AB,ABPO,AAydn3,Za}. Recall that a net $(x_\alpha)_{\alpha\in A}$ in a vector lattice $X$ is {\em order convergent} to $x\in X$, if there exists another net $(y_\beta)_{\beta\in B}$ satisfying $y_\beta \downarrow 0$, and any $\beta\in B$, there exists $\alpha_\beta\in A$ such that $\lvert x_\alpha-x\rvert\leq y_\beta$ for all $\alpha\geq\alpha_\beta$. In this case, we write $x_\alpha\xrightarrow{0} x$. In a vector lattice $X$, a net $(x_\alpha)$ is unbounded order convergent to $x\in X$ if $\lvert x_\alpha-x\rvert\wedge u\xrightarrow{o} 0$ for every $u\in X_+$; see for example \cite{AAydn,N}. Also, these notions can be written for sequence. A vector lattice $E$ is called \textit{Archimedean} whenever $\frac{1}{n}x\downarrow 0$ holds in $E$ for each $x\in E_+$. In this article, unless otherwise, all vector lattices are assumed to be real and Archimedean.

Consider a  subset $K$ of the set $\mathbb{N}$ of all natural numbers. Let's define a new set $K_n=\{k\in K:k\leq n\}$. Then we denote $\lvert K_n\rvert$ for the cardinality of that the set $K_n$. If the limit of $\mu(K)=\lim\limits_{n\to\infty}\lvert K_n\rvert/n$ exists then $\mu(K)$ is called the asymptotic density of the set $K$. Let $X$ be a topological space and $(x_n)$ be a sequence in $X$. Then $(x_n)$ is said to be statistically convergent to $x\in X$ whenever, for each neighborhood $U$ of $x$, we have $\mu\big(\{n\in\mathbb{N}:x_n\notin U\}\big)=0$; see for example \cite{AP,M,MK}. Similarly, a sequence $(x_n)$ in a locally solid Riesz space $(E,\tau)$ is said to be statistically $\tau$-convergent to $x\in E$ if it is provided that, for every $\tau$-neighborhood $U$ of zero, $\lim\limits_{n\to\infty}\frac{1}{n}\big\lvert\{k\leq n:(x_k-x)\notin U\}\big\rvert=0$ holds; see \cite{AP}. Motivated by the above definitions, we give the following notion.
\begin{defi}
Let $(E,\tau)$ be a locally solid vector lattice and $(x_n)$ be a sequence in $E$. Then $(x_n)$ is said to be statistically unbounded $\tau$-convergent to $x\in E$ if, for every zero neighborhood $U\in\mathcal{N}$, it satisfies the following limit 
$$
\lim\limits_{n\to\infty}\frac{1}{n}\big\lvert\{k\leq n:\lvert x_k-x\rvert\wedge u\notin U\}\big\rvert=0 \eqno(1)
$$
for all $u\in E_+$. We abbreviate this convergence as $x_\alpha\xrightarrow{st-u_\tau}x$, or shortly, we say that $(x_n)$ $st$-$u_\tau$-converges to $x$. 
\end{defi}

Let's take $K_n=\{k\leq n:\lvert x_k-x\rvert\wedge u\notin U\}$ for arbitrary zero neighborhood $U$ in any locally solid vector lattice $E$. Then $(x_n)$ is statistically unbounded $\tau$-convergent to $x\in E$ whenever $\mu(K_n)=0$ in $(1)$ for every $u\in E_+$. In this paper, we use the following fact without giving the reference, and so we shall keep in mind it.
\begin{rem}\
Let $(E,\tau)$ be a locally solid Riesz space and $U$ be a zero neighborhood. If $y\leq x$ for any $x,y\in E$ then we have that $y\notin U$ implies $x\notin U$. Indeed, assume $y\notin U$ and $x\in U$. Then we have $y\in U$ because $U$ is solid and $y\leq x$. So, we have a contradict, and so we get $x\notin U$.
\end{rem}

\begin{exam}
Let's consider the locally solid Riesz space $(c_0,\lVert\cdot\rVert)$ the set of null sequences with the supremum norm $\lVert\cdot\rVert$on $c_0$. Consider the sequence $(e_n)$ of the standard unit vectors in $c_0$. Then the base $\mathcal{N}$ consists of the zero neighborhoods $U_r=\{x\in c_0:\lVert x\rVert<r\}$, where $r$ is a positive real number. Thus, we get that $e_n\xrightarrow{st-u_\tau}0$ in $c_0$. Indeed, for arbitrary zero neighborhood $U$, there exist some $U_r$ in $\mathcal{N}$ such that $U_r\subseteq U$, and also, $K_n=\{n\in \mathbb{N}:\lvert x_n-0\lvert\wedge u\in U_r\}=\{n\in \mathbb{N}:\lVert e_n\wedge u\rVert< r\}$ for all $u\in E_+$. It can be seen that $\mu(K_n)=1$. Thus, we have $e_n\xrightarrow{st-u_\tau}0$.
\end{exam}

\begin{rem}\label{basic remark}
The statistically $\tau$-convergence implies statistically unbounded $\tau$-convergence. Indeed, for arbitrary zero neighborhood $U$ and fixed $u\in E_+$, by applying the formula $\lvert x_k-x\rvert\wedge u\leq\lvert x_k-x\rvert$, we can  see that $\lvert x_k-x\rvert\wedge u\notin U$ implies $\lvert x_k-x\rvert\notin U$. Otherwise, it contradicts with being solid of $U$. Hence, $\{k\leq n:(\lvert x_k-x\rvert\wedge u)\notin U\}\subseteq\{k\leq n:(x_k-x)\notin U\}$ holds for all $u\in E_+$. It means that $st$-$\tau$-convergence implies $st$-$u_\tau$-convergence
\end{rem}
For the converse of Remark \ref{basic remark}, we give Theorem \ref{the converse theorem} in the next section.

\section{Main Results}\label{S2}

The lattice operations are uniformly continuous in locally solid vector lattice; see for example \cite[Thm.2.17.]{AB}. Similarly, the lattice operations are continuous in the following sense.
\begin{thm}\label{st are continuous}
Let $(x_n)$ and $(y_n)$ be two sequences in a locally solid vector lattice $(E,\tau)$. If $x_n\xrightarrow{st-u_\tau}x$ and $y_n\xrightarrow{st-u_\tau}y$ then $(x_n\vee y_n)\xrightarrow{st-u_\tau} x\vee y$.
\end{thm}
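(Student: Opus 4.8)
The plan is to reduce the lattice-operation claim to the two hypotheses by combining the classical Birkhoff inequality with the subadditivity of the truncation $t\mapsto t\wedge u$. First I would record the pointwise estimate
$$
\lvert x_k\vee y_k-x\vee y\rvert\leq\lvert x_k-x\rvert+\lvert y_k-y\rvert,
$$
which holds in any vector lattice: applying $\lvert a\vee c-b\vee c\rvert\leq\lvert a-b\rvert$ twice (first with the common element $x_k$ to pass from $x_k\vee y_k$ to $x_k\vee y$, then with the common element $y$ to pass from $x_k\vee y$ to $x\vee y$) and using the triangle inequality gives the bound. Next I would invoke the standard inequality $u\wedge(a+b)\leq(u\wedge a)+(u\wedge b)$, valid for $a,b,u\in E_+$, to deduce that for every $u\in E_+$,
$$
\lvert x_k\vee y_k-x\vee y\rvert\wedge u\leq\big(\lvert x_k-x\rvert\wedge u\big)+\big(\lvert y_k-y\rvert\wedge u\big).
$$

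Then I fix an arbitrary zero neighborhood $U\in\mathcal{N}$ and a positive vector $u\in E_+$, and use the base axioms to choose a (necessarily solid) $V\in\mathcal{N}$ with $V+V\subseteq U$. The key observation is that whenever both $\lvert x_k-x\rvert\wedge u\in V$ and $\lvert y_k-y\rvert\wedge u\in V$, their sum lies in $V+V\subseteq U$; since the truncated difference $\lvert x_k\vee y_k-x\vee y\rvert\wedge u$ is a positive element dominated by that sum and $U$ is solid, it follows that $\lvert x_k\vee y_k-x\vee y\rvert\wedge u\in U$. Taking the contrapositive yields the index inclusion
$$
\{k\leq n:\lvert x_k\vee y_k-x\vee y\rvert\wedge u\notin U\}\subseteq\{k\leq n:\lvert x_k-x\rvert\wedge u\notin V\}\cup\{k\leq n:\lvert y_k-y\rvert\wedge u\notin V\}.
$$

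Finally I would pass to cardinalities, divide by $n$, and apply $\lvert A\cup B\rvert\leq\lvert A\rvert+\lvert B\rvert$ to bound the density of the bad set for $(x_n\vee y_n)$ by the sum of the densities of the two bad sets on the right. Since $x_n\xrightarrow{st-u_\tau}x$ and $y_n\xrightarrow{st-u_\tau}y$, applied with the neighborhood $V$ and the same $u$, both right-hand densities tend to $0$ as $n\to\infty$, which forces the left-hand density to $0$; as $U$ and $u$ were arbitrary, this is precisely $(x_n\vee y_n)\xrightarrow{st-u_\tau}x\vee y$. The only genuinely delicate points are the two algebraic inequalities—lining up the Birkhoff estimate with the subadditivity of $\,\cdot\wedge u\,$ so that the truncated difference is dominated by the truncated differences of the inputs—and the correct use of solidity of $U$ to turn membership of the sum into membership of the dominated element. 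Everything after that is the routine subadditivity of asymptotic density, so I expect no serious obstacle beyond setting up these inequalities carefully.
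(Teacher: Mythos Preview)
Your argument is correct and follows essentially the same route as the paper: both proofs combine the Birkhoff inequality $\lvert a\vee c-b\vee c\rvert\leq\lvert a-b\rvert$ with the subadditivity $(a+b)\wedge u\leq a\wedge u+b\wedge u$ to obtain $\lvert x_k\vee y_k-x\vee y\rvert\wedge u\leq(\lvert x_k-x\rvert\wedge u)+(\lvert y_k-y\rvert\wedge u)$, then pick $V$ with $V+V\subseteq U$ and use solidity. The only cosmetic difference is that you phrase the density step via the contrapositive and a union bound on the ``bad'' index sets, whereas the paper argues directly that the ``good'' index set has density~$1$; your formulation is in fact slightly more explicit about why the density passes to the limit.
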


\begin{proof}
Let $U$ be an arbitrary zero neighborhood in $E$. Thus, there is another zero neighborhood $V$ in $\mathcal{N}$ such that $V+V\subseteq U$. Consider the set $K_{yn}=\{n\in\mathbb{N}:\lvert y_n-y\rvert\wedge u \in V\}$ and $K_{xn}=\{n\in\mathbb{N}:\lvert x_n-x\rvert\wedge u\in V\}$ for every $u\in E_+$. Then $K_{xn}=K_{yn}=1$ because of $x_n\xrightarrow{st-u_\tau}x$ and $y_n\xrightarrow{st-u_\tau}y$.
	
On the other hand, by applying \cite[Thm.1.9.(2)]{ABPO}, i.e., $\lvert a\vee b-b\vee c\rvert\leq \lvert a-c\rvert$ for any vectors in a vector lattice, we can get
\begin{eqnarray*}
\lvert x_n\vee y_n-x\vee y\rvert\wedge u&=&\lvert x_n\vee y_n-y_n\vee x+y_n\vee x-x\vee y\rvert\wedge u\\&\leq& \lvert x_n\vee y_n-y_n\vee x\rvert\wedge u+\lvert y_n\vee x-x\vee y\rvert\wedge u\\&\leq& \lvert x_n-x\rvert\wedge u+\lvert y_n-y\rvert\wedge u
\end{eqnarray*}
for all $u\in E_+$. So, we have $\lvert x_n\vee y_n-x\vee y\rvert\wedge u\in U$ for all $u\in E_+$ and for each $n\in \mathbb{N}$ because of $\lvert x_n-x\rvert \wedge u+\lvert y_n-y\rvert \wedge u\in V+V\subseteq U$. Thus, we have $\mu\big(\{n\in\mathbb{N}:\lvert x_n\vee y_n-x\vee y\rvert\wedge u \in U\}\big)=1$ for all $u\in E_+$. That is, we get $(x_n\vee y_n)\xrightarrow{st-u_\tau} x\vee y$.  
\end{proof}

By using the $st$-$u_\tau$-convergence, we can define the $st$-$u_\tau$-closed subset. 
\begin{defi}
Let $(E,\tau)$ be a locally solid vector lattice and $A$ be a subset of $E$. Then, $A$ is called $st$-$u_\tau$-closed subset in $E$ if, for any sequence $(a_n)$ in $A$ which is $st$-$u_\tau$-convergent to $a\in E$, it holds $a\in A$.
\end{defi}

\begin{prop}\label{E_+ is closed}
The positive cone $E_+=\{x\in E:0\leq x\}$ of a locally solid Riesz space $(E,\tau)$ is $st$-$u_\tau$-closed.
\end{prop}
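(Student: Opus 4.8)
The plan is to reduce everything to showing that the negative part $a^-$ of the limit vanishes, since $a^- = 0$ is equivalent to $a \in E_+$. So suppose $(a_n)$ is a sequence in $E_+$ with $a_n \xrightarrow{st-u_\tau} a$; my goal is to prove $a^- = 0$.

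First I would record the key pointwise estimate. Since each $a_n \geq 0$ we have $-a_n \leq 0$, hence $-a \leq a_n - a \leq \lvert a_n - a\rvert$; together with $0 \leq \lvert a_n - a\rvert$ this gives the lattice inequality $a^- = (-a)\vee 0 \leq \lvert a_n - a\rvert$ for every $n$. The decisive move is then to feed the \emph{specific} positive vector $u = a^-$ into the definition of $st$-$u_\tau$-convergence, which is legitimate because that definition quantifies over all $u \in E_+$. Meeting the previous inequality with $a^-$ yields $a^- = a^- \wedge a^- \leq \lvert a_n - a\rvert \wedge a^-$, while trivially $\lvert a_n - a\rvert \wedge a^- \leq a^-$; therefore $\lvert a_n - a\rvert \wedge a^- = a^-$ for \emph{every} $n$.

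Next I would exploit that this common value does not depend on $n$. Fix an arbitrary zero neighborhood $U \in \mathcal{N}$ and set $K_n = \{k \leq n : \lvert a_k - a\rvert \wedge a^- \notin U\}$. Because $\lvert a_k - a\rvert \wedge a^- = a^-$ for all $k$, the membership condition collapses to the single statement $a^- \notin U$, which holds either for all indices or for none. If $a^- \notin U$, then $K_n = \{1,\dots,n\}$ and $\lvert K_n\rvert/n = 1$, which does not tend to $0$ and contradicts $a_n \xrightarrow{st-u_\tau} a$ applied with $u = a^-$. Hence $a^- \in U$.

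Finally, since $U \in \mathcal{N}$ was arbitrary and $\mathcal{N}$ is a base of (solid) zero neighborhoods of the topology $\tau$, the relation $a^- \in \bigcap_{U \in \mathcal{N}} U = \{0\}$ forces $a^- = 0$, i.e. $a \geq 0$ and so $a \in E_+$. The only genuinely delicate points are the choice of the test vector $u = a^-$ (which depends on the a priori unknown limit $a$) and the separation step $\bigcap_{U\in\mathcal N}U=\{0\}$ at the end; everything else reduces to the standard inequality $(-a)\vee 0 \leq \lvert a_n - a\rvert$ and the elementary observation that a sequence whose relevant truncation is constantly $a^-$ can be statistically near $0$ only when $a^-$ already lies in every neighborhood of $0$.
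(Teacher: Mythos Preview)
Your argument is correct and is genuinely different from the paper's. The paper proceeds via Theorem~\ref{st are continuous} (continuity of $\vee$): from $x_n\in E_+$ one has $x_n=x_n\vee 0\xrightarrow{st-u_\tau}x\vee 0=x^+$, and then (implicitly, by uniqueness of $st$-$u_\tau$-limits) $x=x^+\in E_+$. You instead bypass Theorem~\ref{st are continuous} entirely: the inequality $a^-\leq\lvert a_n-a\rvert$ together with the clever choice of test vector $u=a^-$ collapses $\lvert a_n-a\rvert\wedge a^-$ to the constant $a^-$, after which the density condition forces $a^-\in U$ for every $U\in\mathcal N$. Your route is more self-contained and makes the separation step $\bigcap_{U\in\mathcal N}U=\{0\}$ (i.e.\ the Hausdorff hypothesis) explicit, whereas the paper's short proof hides exactly the same hypothesis inside the tacit appeal to uniqueness of limits. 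Both approaches ultimately need $\tau$ to be Hausdorff, which is not listed among the standing assumptions; you are right to flag that step as delicate.
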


\begin{proof}
Assume $(x_n)$ is a sequence in $E_+$ such that it $st$-$u_\tau$-converges $x\in E$. By applying Theorem \ref{st are continuous}, $x_n^+=x_n\vee0\xrightarrow{st-u_\tau}x\vee0=x^+\geq0$. Thus, we get $x\in E_+$.
\end{proof}

We continue with several basic results which are motivated by their analogies from vector lattice theory.
\begin{thm}\label{basic remarkasic properties of st convergence}
Let $x_n\xrightarrow{st-u_\tau}x$ and $y_n\xrightarrow{st-u_\tau}y$ in a locally solid Riesz space. Then we have the following facts:
\begin{enumerate}
\item[(i)] $x_n\xrightarrow{st-u_\tau}x$ iff  $(x_n-x)\xrightarrow{st-u_\tau}0$ iff $\lvert x_n-x\rvert\xrightarrow{st-u_\tau}0$;
\item[(ii)] $rx_n+sy_n\xrightarrow{st-u_\tau}rx+sy$ for any $r,s\in{\mathbb R}$;
\item[(iii)] if $x_n\ge y_n$ for all $n\in\mathbb{N}$ then $x\ge y$;
\item[(iv)] $x_{n_k}\xrightarrow{st-u_\tau}x$ for any subsequence $(x_{n_k})$ of $x_n$;
\item[(v)] $\lvert x_n\rvert\xrightarrow{st-u_\tau} \lvert x\rvert$;
\item[(vi)] if $\tau$ Hausdroff topology, $x_n\xrightarrow{st-u_\tau}x$ and $x_n\xrightarrow{st-u_\tau}y$ then $x=y$.
\end{enumerate}	
\end{thm}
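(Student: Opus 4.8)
The plan is to reduce every item to the single ``bad--set'' quantity $|x_k-x|\wedge u$ and to reuse the neighborhood--manipulation technique from the proof of Theorem \ref{st are continuous}, together with the monotonicity principle that $y\le x$ and $y\notin U$ force $x\notin U$ (solidity of $U$) and the obvious fact that a subset of a density--zero set has density zero. For (i), the key observation is that all three convergences are governed by literally the same family of sets: for every $u\in E_+$ one has $|(x_k-x)-0|\wedge u=|x_k-x|\wedge u=\big||x_k-x|-0\big|\wedge u$. Hence the conditions $x_n\xrightarrow{st-u_\tau}x$, $(x_n-x)\xrightarrow{st-u_\tau}0$ and $|x_n-x|\xrightarrow{st-u_\tau}0$ agree term by term, and (i) is immediate.

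For (ii) I would first treat scalars and then sums, in parallel with Theorem \ref{st are continuous}. Fix $r\neq0$ and $u\in E_+$; using $\lambda(a\wedge b)=(\lambda a)\wedge(\lambda b)$ for $\lambda\ge0$, write $|rx_k-rx|\wedge u=|r|\big(|x_k-x|\wedge(u/|r|)\big)$, so that $|rx_k-rx|\wedge u\notin U$ is equivalent to $|x_k-x|\wedge v\notin U'$, where $v=u/|r|\in E_+$ and $U'=\tfrac1{|r|}U\in\mathcal N$ by the scaling property of the base. The two bad sets are then identical, giving $rx_n\xrightarrow{st-u_\tau}rx$ (the case $r=0$ being trivial). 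For sums, given $U$ choose $V\in\mathcal N$ with $V+V\subseteq U$; the triangle inequality and subadditivity of $(\cdot)\wedge u$ yield $|(x_k+y_k)-(x+y)|\wedge u\le|x_k-x|\wedge u+|y_k-y|\wedge u$, so whenever both summands lie in $V$ the left side lies in $V+V\subseteq U$. Thus the bad set for the sum is contained in the union of the two individual bad sets, each of density zero; hence it has density zero. Combining the two steps gives $rx_n+sy_n\xrightarrow{st-u_\tau}rx+sy$.

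Items (iii), (v) and (vi) then fall out by feeding the earlier results back in. For (iii), $x_n\ge y_n$ means $x_n-y_n\in E_+$; by (ii) $x_n-y_n\xrightarrow{st-u_\tau}x-y$, and since $E_+$ is $st$-$u_\tau$-closed by Proposition \ref{E_+ is closed} we get $x-y\in E_+$, i.e. $x\ge y$. For (v), the reverse triangle inequality $\big||x_k|-|x|\big|\le|x_k-x|$ gives $\big||x_k|-|x|\big|\wedge u\le|x_k-x|\wedge u$, so by monotonicity the bad set for $|x_n|$ is contained in that for $x_n$ and has density zero (alternatively, apply Theorem \ref{st are continuous} to $|x|=x\vee(-x)$ with $-x_n\xrightarrow{st-u_\tau}-x$). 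For (vi), assume $x_n\xrightarrow{st-u_\tau}x$ and $x_n\xrightarrow{st-u_\tau}y$; parts (i) and (ii) show that the constant sequence $y-x$ is $st$-$u_\tau$-convergent to $0$, and since it is constant its bad set has density zero only if $|y-x|\wedge u\in U$ for every $U\in\mathcal N$ and every $u\in E_+$. Hausdorffness gives $\bigcap_{U\in\mathcal N}U=\{0\}$, so $|y-x|\wedge u=0$ for all $u$; taking $u=|y-x|$ yields $|y-x|=0$, i.e. $x=y$.

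The step I expect to be the main obstacle is (iv). The natural route is again set inclusion: for a subsequence $(x_{n_k})$ the map $k\mapsto n_k$ sends $\{k:|x_{n_k}-x|\wedge u\notin U\}$ into the ambient bad set $\{n:|x_n-x|\wedge u\notin U\}$. The difficulty is that asymptotic density is monotone only within a \emph{fixed} indexing: a density--zero subset of $\mathbb{N}$ can meet $\{n_k\}$ in a set that is \emph{not} density--zero when counted in the $k$--index, so the inclusion does not by itself transfer the density--zero conclusion to the subsequence. To close this genuinely one would have to restrict to subsequences taken along index sets of density one, or invoke the characterization of statistical convergence by ordinary convergence off a density--zero set; I would expect the real care in the theorem to concentrate here, whereas the remaining parts are routine once (i) and (ii) are in place.
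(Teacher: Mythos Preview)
Your treatment of (i), (iii), (v), and (vi) matches the paper's almost line for line; in (v) the paper takes exactly your second option, writing $|x|=x\vee(-x)$ and invoking Theorem~\ref{st are continuous} together with (i). In (ii) there is a small but genuine difference: where you handle scalars in one stroke via $|rx_k-rx|\wedge u=|r|\big(|x_k-x|\wedge(u/|r|)\big)$ and the rescaled neighborhood $\tfrac1{|r|}U\in\mathcal N$, the paper splits into the cases $|r|\le1$ (using balancedness $|r|V\subseteq V$) and $|r|>1$ (choosing $V$ with $k$ copies $V+\cdots+V\subseteq U$, $k=\lceil|r|\rceil$). Your route is shorter; the paper's only buys avoidance of the fourth base property $\lambda V\in\mathcal N$, which it has listed anyway. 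The additive part of (ii) and the argument for (vi) are essentially identical in both.

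On (iv) your diagnosis is exactly right, and it is worth stating plainly: the paper's own proof is precisely the naive inclusion
\[
\{k\in\mathbb N:\lvert x_{n_k}-x\rvert\wedge u\notin U\}\subseteq\{n\in\mathbb N:\lvert x_n-x\rvert\wedge u\notin U\}
\]
that you flag as insufficient. Your objection is not a technicality but a genuine error: asymptotic density is not hereditary along arbitrary subsequences, and the claim itself fails already in $E=\mathbb R$ with the usual topology (set $x_n=1$ when $n$ is a perfect square and $x_n=0$ otherwise; then $x_n\xrightarrow{st-u_\tau}0$, yet the subsequence $x_{k^2}\equiv1$ does not). So the gap you identify in your proposal is in fact a gap in the paper's statement and proof; your suggested repair---restrict to subsequences along index sets of density one, or pass through the characterization via ordinary convergence off a density-zero set---is the standard way such results are salvaged in the statistical-convergence literature.
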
 

\begin{proof}
The $(i)$ comes directly from the definition of the $st$-$u_\tau$-convergence.
	
$(ii)$ Firstly, we show that $rx_n\xrightarrow{st-u_\tau}rx$ for every $r\in\mathbb{R}$. Let $U$ be an arbitrary zero neighborhood and $r$ be a real number. Thus, we have $\mu\big(\{n\in\mathbb{N}:\lvert x_n-x\rvert\wedge u\in U\}\big)=1$ for all $u\in E_+$. If $\lvert r\rvert\leq 1$ then, by applying the properties of $\mathcal{N}$, we can get $\lvert r\rvert(\lvert y_n-y\rvert\wedge u)=\lvert rx_n-rx\rvert\wedge(\lvert r\rvert u)\leq\lvert x_n-x\rvert\wedge u \in U$. In special, if we take $u$ as $\lvert r\rvert u$ then $\lvert rx_n-rx\rvert\wedge u\in U$. So, we get $\mu\big(\{n\in\mathbb{N}:\lvert rx_n-rx\rvert\wedge u\in U\}\big)=1$ for all $\lvert r\rvert\leq 1$ and every $u\in E_+$ because of $\{n\in\mathbb{N}:\lvert x_n-x\rvert\wedge u\in U\}\subseteq\{n\in\mathbb{N}:\lvert rx_n-rx\rvert\wedge u\in U\}$ for each zero neighborhood $U$. It means that $rx_n\xrightarrow{st-u_\tau}rx$ holds for each $\lvert r\rvert\leq 1$. 
	
Next, for $\lvert r\rvert> 1$, it follows from the third property of $\mathcal{N}$ that there is another $V\in\mathcal{N}$ such that $\{V+V+\dots+V\}_{k}\subseteq U$, where $k$ is the smallest integer greater or equal $r$. Then, by the inequality $\lvert rx_n-rx\rvert\wedge u=(\lvert r\rvert\lvert x_n-x\rvert)\wedge u\leq (\lvert k\rvert\lvert x_n-x\rvert)\wedge u=\lvert kx_n-kx\rvert\wedge u\in \{V+V+\dots+V\}_{k}\subseteq U$, we can get that $\lvert rx_n-rx\rvert\wedge u\in U$ for each $u\in E_+$ and for all $n\in\mathbb{N}$ because $U$ is a solid subset. Therefore, we get $\mu\big(\{n\in\mathbb{N}:\lvert rx_n-rx\rvert\wedge u\in U\}\big)=1$ for each $u\in E_+$, i.e., $rx_n\xrightarrow{st-u_\tau}rx$. 
	
Now, we show $x_n+y_n\xrightarrow{st-u_\tau}x+y$. Take $U$ an arbitrary zero neighborhood. By applying the properties of $\mathcal{N}$, there exists another $V\in \mathcal{N}$ with $V+V\subseteq U$. Since $x_n\xrightarrow{st-u_\tau}x$ and $y_n\xrightarrow{st-u_\tau}y$, we have $\mu\big(\{n\in\mathbb{N}:\lvert x_n-x\rvert\wedge u\in U\}\big)=1$ and $\mu\big(\{n\in\mathbb{N}:\lvert y_n-y\rvert\wedge u\in U\}\big)=1$ for all $u\in E_+$. By the formula $\lvert(x_n+y_n)-(x+y)\rvert\wedge u\leq \lvert(x_n-x)\rvert\wedge u+\lvert(y_n+y)\rvert\wedge u\in V+V\subseteq U$ for every $u\in E_+$. So, we get $\mu\big(\{n\in\mathbb{N}:\lvert(x_n+y_n)-(x+y)\rvert\wedge u\in U\}\big)=1$. That is, $x_n+y_n\xrightarrow{st-u_\tau}x+y$.\\
	
$(iii)$ Suppose that $y_n\leq x_n$ holds for all $n\in\mathbb{N}$. Then we can get $0\leq x_n-y_n \in E_+$ for each $n\in\mathbb{N}$. By using $(i)$ and applying Proposition \ref{E_+ is closed}, we have $x_n-y_n\xrightarrow{st-u_\tau}x-y\in E_+$ because of $(x_n-y_n)\in E_+$. Thus, we get $x-y\geq 0$, i.e., $x\geq y$. \\
	
$(iv)$ For arbitrary zero neighborhood $U$, it follows from $\{k\in\mathbb{N}:\lvert x_{n_k}-x\rvert\wedge u\notin U\}\subseteq\{n\in\mathbb{N}:\lvert x_n-x\rvert\wedge u\notin U\}$ for all $u\in E_+$ and $x_n\xrightarrow{st-u_\tau}x$ that $\mu\big(\{k\in\mathbb{N}:\lvert x_{n_k}-x\rvert\wedge u\notin U\}\big)=0$ for every $u\in E_+$ i.e.,  $x_{n_k}\xrightarrow{st-u_\tau}x$ holds.\\
	
$(v)$ By applying the formula $\lvert x\rvert=x\vee(-x)$, and by using $(i)$ and Theorem \ref{st are continuous}, one can get the desired result.\\
	
$(vi)$ Let $U$ be a zero neighborhood. Take $K_n=\{n\in\mathbb{N}:\lvert x_n-x\rvert\wedge u\in U\}$ and $L_n=\{n\in\mathbb{N}:\lvert x_n-y\rvert\wedge u\in U\}$ for $u\in E_+$. Since  $x_n\xrightarrow{st-u_\tau}x$ and $x_n\xrightarrow{st-u_\tau}y$, we have $\mu(K_n)=\mu(L_n)=1$. On another hand, by applying the properties of $\mathcal{N}$, there is another $V\in \mathcal{N}$ such that $V+V\subseteq U$. Consider the inequality $\lvert x-y\rvert\wedge u\leq\lvert x-x_n\rvert\wedge u+\lvert x_n-y\rvert\wedge u\in V+V\subseteq U$ for all $u\in E_+$ and for each $n\in\mathbb{N}$. Thus, for arbitrary zero neighborhood $U$, we have $\lvert x-y\rvert\wedge u\in U$ for every $u\in E_+$. But, since $E$ is a Hausdorff space, the intersection of all zero neighborhood is the singleton $0$. That is, $\lvert x-y\rvert\wedge u=0$ for all $u\in E_+$, and so, we get $x=y$.
\end{proof}

The following proposition is an $st$-$u_\tau$-version of \cite[Lem.2.5.(ii)]{AAydn2} and \cite[Thm.2.8.]{AAydn}.
\begin{prop}\label{motone and st implies order}
Any monotone $st$-$u_\tau$-convergent sequence in a locally solid vector lattice order converges to its $st$-$u_\tau$-limit.
\end{prop}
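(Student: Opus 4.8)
The plan is to reduce to the increasing case and then show that the $st$-$u_\tau$-limit is exactly the supremum of the sequence, from which order convergence is immediate. Without loss of generality I would assume $(x_n)$ is increasing with $x_n\xrightarrow{st-u_\tau}x$; the decreasing case is symmetric, since $(-x_n)\xrightarrow{st-u_\tau}-x$ by part $(ii)$ of Theorem \ref{basic remarkasic properties of st convergence} and order convergence is preserved under negation.

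First I would establish that $x$ is an upper bound for the sequence. Fix $m\in\mathbb{N}$. The tail $(x_n)_{n\geq m}$ is a subsequence of $(x_n)$, so by part $(iv)$ of Theorem \ref{basic remarkasic properties of st convergence} it still $st$-$u_\tau$-converges to $x$. Since $x_n\geq x_m$ for every $n\geq m$, and the constant sequence $(x_m)$ trivially $st$-$u_\tau$-converges to $x_m$, part $(iii)$ of the same theorem yields $x\geq x_m$. As $m$ was arbitrary, $x$ is an upper bound for $\{x_n:n\in\mathbb{N}\}$. Alternatively, the tail $(x_n-x_m)_{n\geq m}$ lies in $E_+$ and $st$-$u_\tau$-converges to $x-x_m$ by part $(ii)$, so Proposition \ref{E_+ is closed} gives $x-x_m\geq 0$.

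Next I would show $x$ is the \emph{least} upper bound. If $y\in E$ satisfies $y\geq x_n$ for all $n$, then comparing the constant sequence $(y)$, which converges to $y$, with $(x_n)$, which converges to $x$, part $(iii)$ of Theorem \ref{basic remarkasic properties of st convergence} gives $y\geq x$. Hence $x=\sup_n x_n$, that is, $x_n\uparrow x$.

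Finally, $x_n\uparrow x$ forces order convergence: the sequence $y_n:=x-x_n$ is decreasing with $\inf_n y_n=x-\sup_n x_n=0$, so $y_n\downarrow 0$, and $\lvert x_n-x\rvert=x-x_n=y_n$, which is precisely the definition of $x_n\xrightarrow{o}x$. The only delicate point is the first step, where the monotonicity relation $x_n\geq x_m$ holds only along the tail rather than for all indices at once; this is handled cleanly by first passing to the subsequence $(x_n)_{n\geq m}$ via part $(iv)$ and only then invoking the comparison property $(iii)$.
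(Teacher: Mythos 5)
Your proof is correct and takes essentially the same route as the paper's: reduce to the increasing case, show the $st$-$u_\tau$-limit $x$ is an upper bound of $(x_n)$ using the $st$-$u_\tau$-closedness of $E_+$ (your ``alternative'' version of that step is literally the paper's argument), and then show $x$ is the least upper bound by comparing with an arbitrary upper bound $y$. Your explicit appeal to part $(iv)$ for the tail and your spelled-out final step $x_n\uparrow x\Rightarrow x_n\xrightarrow{o}x$ only make precise details the paper leaves implicit; the substance is identical.
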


\begin{proof}
It is enough to show that if a sequence $(x_n)\uparrow$ in a locally solid vector lattice $(E,\tau)$ and $x_n\xrightarrow{st-u_\tau}x$ then $x_n\uparrow x$. Let's fix arbitrary $n\in \mathbb{N}$. Then $x_m-x_n\in E_+$ for every $m\geq n$. By using Proposition \ref{E_+ is closed} and Theorem \ref{basic remarkasic properties of st convergence}$(i)$, we get $x_m-x_n\xrightarrow{st-u_\tau}x-x_n\in E_+$ as $m\to\infty$. Therefore, $x\geq x_n$ for any $n$. Thus, since $n$ is arbitrary, $x$ is an upper bound of $(x_n)$. Assume $y$ is another upper bound of $(x_n)$, i.e., $y\geq x_n$ for all $n\in \mathbb{N}$. Then, again by Proposition \ref{E_+ is closed}, we have $y-x_n\xrightarrow{st-u_\tau} y-x\in E_+$, or $y\ge x$. Thus, we get the desired result $x_n \uparrow x$.
\end{proof}

In Remark \ref{basic remark}, we show that the statistically $\tau$-convergence implies statistically unbounded $\tau$-convergence. For the converse, we give the following theorem.
\begin{thm}\label{the converse theorem}
Let $(x_n)$ be a monotone sequence in a locally solid Riesz space $(E,\tau)$. Then the statistically unbounded $\tau$-convergence of $(x_n)$ implies the statistically $\tau$-convergence of it.
\end{thm}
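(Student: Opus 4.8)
The plan is to reduce the problem to a single, carefully chosen gauge element $u\in E_+$ for which the truncated quantity $\lvert x_n-x\rvert\wedge u$ coincides with the full $\lvert x_n-x\rvert$; then statistical unbounded $\tau$-convergence evaluated at that one $u$ already yields statistical $\tau$-convergence.

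First I would pin down the order behaviour of the limit. Suppose $x_n\xrightarrow{st-u_\tau}x$. Since $(x_n)$ is monotone, Proposition \ref{motone and st implies order} applies: in the increasing case it gives $x_n\uparrow x$, and the decreasing case reduces to it by passing to $(-x_n)$ via Theorem \ref{basic remarkasic properties of st convergence}$(ii)$, yielding $x_n\downarrow x$. In either situation the sequence lies between its first term and its order limit, so that $\lvert x_n-x\rvert\leq\lvert x_1-x\rvert$ for every $n$; indeed, in the increasing case $x_1\leq x_n\leq x$ gives $\lvert x_n-x\rvert=x-x_n\leq x-x_1=\lvert x_1-x\rvert$, and the decreasing case is symmetric.

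Next, set $u_0:=\lvert x_1-x\rvert\in E_+$. From $\lvert x_n-x\rvert\leq u_0$ we obtain $\lvert x_n-x\rvert\wedge u_0=\lvert x_n-x\rvert$ for all $n$. Fixing an arbitrary zero neighborhood $U\in\mathcal N$ and invoking the definition of $st$-$u_\tau$-convergence with this specific $u=u_0$, we get
\[
\mu\big(\{k\leq n:\lvert x_k-x\rvert\wedge u_0\notin U\}\big)=\mu\big(\{k\leq n:\lvert x_k-x\rvert\notin U\}\big)=0.
\]
Finally, since $x_k-x$ and $\lvert x_k-x\rvert$ share the same absolute value and $U$ is solid, we have $\lvert x_k-x\rvert\notin U$ if and only if $(x_k-x)\notin U$; hence $\mu\big(\{k\leq n:(x_k-x)\notin U\}\big)=0$ for every zero neighborhood $U$, which is exactly the statistical $\tau$-convergence of $(x_n)$ to $x$.

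The only substantive step is the first one: recognizing that monotonicity, through Proposition \ref{motone and st implies order}, forces all the differences $\lvert x_n-x\rvert$ to be dominated by the single fixed vector $u_0=\lvert x_1-x\rvert$. Once this domination is secured the truncation by $u_0$ is vacuous, the unbounded and the ordinary statistical convergences coincide, and the remaining steps follow immediately from the definitions and the solidity of the neighborhoods. I do not anticipate a genuine obstacle here, provided one is careful to cover both the increasing and decreasing cases.
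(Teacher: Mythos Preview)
Your proof is correct and follows essentially the same strategy as the paper: use Proposition~\ref{motone and st implies order} to turn the monotone $st$-$u_\tau$-limit into an order limit, then exploit the resulting uniform order bound on $\lvert x_n-x\rvert$ to choose a single $u$ at which the truncation becomes vacuous. The only cosmetic difference is the choice of gauge: the paper first reduces (without loss of generality) to the positive increasing case and takes $u=x$, whereas you take $u_0=\lvert x_1-x\rvert$, which handles both monotone cases uniformly without the positivity reduction.
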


\begin{proof}
Without loss of generality, we may assume that $(x_n)$ positive and increasing sequence in $E_+$. It follows from Proposition \ref{motone and st implies order} that $x_n\uparrow x$ for some $x\in E$ because of $x_n\xrightarrow{st-u_\tau}x$. Hence, we have $0\leq x-x_n\leq x$ for all $n\in\mathbb{N}$. On the other hand, for each $u\in E_+$ and for any arbitrary zero neighborhood $U$, we have
$$
\mu\big(\{n\in\mathbb{N}:\lvert x_n-x\rvert\wedge u\in U\}\big)=1.
$$ 
In particular, for $u=x$, we obtain that
$$
\{n\in\mathbb{N}:\lvert x_n-x\rvert\wedge u\in U\}=\{n\in\mathbb{N}: (x-x_n)\in U\}. \eqno(2)
$$
Thus, we get $\mu\big(\{n\in\mathbb{N}:(x-x_n)\in U\}\big)=1$ from $(2)$, i.e., $(x_n)$  statistically $\tau$-converges to $x$.
\end{proof}

Recall that a band $B$ in a vector lattice $E$ is called a projection band whenever it satisfies $E=B\oplus B^d$, where $B^d=\{x\in E:\lvert x\rvert\wedge\lvert b\rvert=0,\ \text{for all}\ b\in B \}$ is disjoint complement set of $B$.
\begin{prop}
Let $(E,\tau)$ be a locally solid Riesz space and $B$ be a projection band of $E$. If $x_n\xrightarrow{st-u_\tau}x$ in $E$ then  $P_B(x_n)\xrightarrow{st-u_\tau} P_B(x)$ in both $E$ and $B$, where $P_B$ is the corresponding order projection of $B$. 
\end{prop}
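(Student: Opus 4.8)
The plan is to reduce both assertions to the hypothesis $x_n\xrightarrow{st-u_\tau}x$ in $E$ by the same inclusion-of-index-sets technique used in Remark \ref{basic remark} and in Theorem \ref{basic remarkasic properties of st convergence}$(iv)$. The only facts I need about the order projection $P_B$ are that it is linear and that it is a band projection, so that $0\leq P_B\leq I$ and consequently $\lvert P_B(y)\rvert\leq\lvert y\rvert$ for every $y\in E$. Indeed, applying the positive operator $P_B$ to $-\lvert y\rvert\leq y\leq\lvert y\rvert$ gives $\lvert P_B(y)\rvert\leq P_B(\lvert y\rvert)$, and $P_B(\lvert y\rvert)\leq\lvert y\rvert$ because $I-P_B\geq 0$ and $\lvert y\rvert\geq 0$. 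All of this is standard for projection bands.

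First I would establish the convergence in $E$. By linearity, $P_B(x_n)-P_B(x)=P_B(x_n-x)$, and hence by the displayed inequality $\lvert P_B(x_n)-P_B(x)\rvert=\lvert P_B(x_n-x)\rvert\leq\lvert x_n-x\rvert$. Meeting both sides with $u\in E_+$ yields $\lvert P_B(x_n)-P_B(x)\rvert\wedge u\leq\lvert x_n-x\rvert\wedge u$. Now fix an arbitrary zero neighborhood $U\in\mathcal{N}$ and $u\in E_+$. Since $U$ is solid and both meets are positive, whenever $\lvert x_n-x\rvert\wedge u\in U$ we also have $\lvert P_B(x_n)-P_B(x)\rvert\wedge u\in U$, which is exactly the solidity observation of the Remark following the Definition. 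Therefore
$$
\{n\in\mathbb{N}:\lvert x_n-x\rvert\wedge u\in U\}\subseteq\{n\in\mathbb{N}:\lvert P_B(x_n)-P_B(x)\rvert\wedge u\in U\}.
$$
The left-hand set has asymptotic density $1$ because $x_n\xrightarrow{st-u_\tau}x$, so the right-hand set does too, which is precisely $P_B(x_n)\xrightarrow{st-u_\tau}P_B(x)$ in $E$.

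Next I would deduce the convergence in $B$. Since $B$ is a (projection) band it is an ideal, hence a sublattice, so the lattice operations computed in $B$ agree with those computed in $E$; moreover $B$ carries the induced locally solid topology whose base of solid zero neighborhoods consists of the sets $U\cap B$ with $U\in\mathcal{N}$. Fix a zero neighborhood $W$ of $B$, choose $U\in\mathcal{N}$ with $U\cap B\subseteq W$, and fix $u\in B_+\subseteq E_+$. The element $\lvert P_B(x_n)-P_B(x)\rvert\wedge u$ is a lattice expression in elements of $B$, hence lies in $B$; so if it belongs to $U$ it belongs to $U\cap B\subseteq W$. Combining this with the $E$-convergence just proved (valid for this $u\in E_+$) gives
$$
\{n\in\mathbb{N}:\lvert P_B(x_n)-P_B(x)\rvert\wedge u\in U\}\subseteq\{n\in\mathbb{N}:\lvert P_B(x_n)-P_B(x)\rvert\wedge u\in W\},
$$
and again the density passes from $1$ on the left to $1$ on the right. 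As $W$ and $u\in B_+$ were arbitrary, $P_B(x_n)\xrightarrow{st-u_\tau}P_B(x)$ in $B$ as well.

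The routine content is the two index-set inclusions; the only point requiring care — the hard part — is the bookkeeping in the last paragraph: one must check that the infimum and the positive cone are interpreted consistently in $B$ and in $E$, and that the subspace topology on a projection band is genuinely locally solid with the stated neighborhood base. Once those standard facts about projection bands are in place, no further computation is needed.
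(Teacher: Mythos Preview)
Your proof is correct and follows essentially the same route as the paper's: both establish the key inequality $\lvert P_B(x_n)-P_B(x)\rvert\wedge u\leq\lvert x_n-x\rvert\wedge u$ (the paper via the lattice-homomorphism identity $\lvert P_B(y)\rvert=P_B(\lvert y\rvert)$ together with $P_B\leq I$, you via $0\leq P_B\leq I$ alone) and then use solidity of $U$ to pass density $1$ through the resulting index-set inclusion. Your treatment of the ``in $B$'' part is in fact more careful than the paper's, which simply asserts the conclusion without discussing the induced locally solid topology on $B$.
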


\begin{proof}
Let $U$ be an arbitrary zero neighborhood. Then we have $\mu\big(\{n\in\mathbb{N}:(\lvert x_n-x\rvert\wedge u)\in U\}\big)=1$ because of $x_n\xrightarrow{st-u_\tau}x$. It is known that every order projection is an order continuous lattice homomorphism; see \cite[p.94]{ABPO}. Thus, $P_B$ is a lattice homomorphism and $0\leq P_B\leq I$; see \cite[Thm.1.44.]{ABPO}. Now, by applying \cite[Thm.2.14.]{ABPO}, we can get $\lvert P_B(x_n)-P_B(x)\rvert\wedge u=P_B(\lvert x_n-x\rvert)\wedge u\leq \lvert x_\alpha-x\rvert\wedge u$ for every $u\in E_+$. Then it follows easily from $\mu\big(\{n\in\mathbb{N}:(\lvert P_B(x_n)-P_B(x)\rvert\wedge u)\in U\}\big)=1$ that $P_B(x_\alpha)\xrightarrow{st-u_\tau} P_B(x)$ in $E$ and $B$.
\end{proof}

We continue with several basic notions in locally solid vector lattice concerning the $st$-$u_\tau$-convergence, which are motivated by their analogies from vector lattice theory.
\begin{defi}
Let $(E,\tau)$ be a locally solid vector lattice. Then 
\begin{enumerate}
\item[(1)] a sequence $(x_n)$ in $E$ is said to be $st$-$u_\tau$-Cauchy if the sequence $(x_m-x_n)_{(m,n)\in\mathbb{N}\times\mathbb{N}}$ $st$-$u_\tau$-converges to zero, i.e., for each zero neighborhood $U$, $\mu\big(\{n\in\mathbb{N}:(\lvert x_m-x_n\rvert\wedge u)\notin U\}\big)=0$ for all $u\in E_+$;
\item[(2)] $E$ is called order $st$-$u_\tau$-continuous if $x_n\xrightarrow{o}0$ implies $x_n\xrightarrow{st-u_\tau}0$;
\item[(3)] $E$ is called $st$-$u_\tau$-complete if every $st$-$u_\tau$-Cauchy sequence in $E$ is $st$-$u_\tau$-convergent.
\end{enumerate}	
\end{defi}
The next proposition follows from the basic definitions and results, so its proof is omitted.
\begin{prop}
If a sequence $(x_n)$ in a locally solid Riesz space $(E,\tau)$ is statistically $u_\tau$-convergent then it is statistically $u_\tau$-Cauchy.
\end{prop}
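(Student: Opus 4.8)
The plan is to reduce the Cauchy condition to the convergence hypothesis by exactly the same triangle-inequality/solidness mechanism used in the additivity part of Theorem \ref{basic remarkasic properties of st convergence}$(ii)$. Suppose $(x_n)$ is statistically $u_\tau$-convergent, say $x_n\xrightarrow{st-u_\tau}x$, and let $U$ be an arbitrary zero neighborhood. First I would use the third property of $\mathcal{N}$ to pick $V\in\mathcal{N}$ with $V+V\subseteq U$. The hypothesis then gives, for each fixed $u\in E_+$, that the set $A_u=\{n\in\mathbb{N}:\lvert x_n-x\rvert\wedge u\in V\}$ has asymptotic density $\mu(A_u)=1$, equivalently its complement is a density-zero set.

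Next I would invoke the lattice estimate $\lvert x_m-x_n\rvert\wedge u\leq\lvert x_m-x\rvert\wedge u+\lvert x_n-x\rvert\wedge u$, which follows from $\lvert x_m-x_n\rvert\leq\lvert x_m-x\rvert+\lvert x_n-x\rvert$ together with the subadditivity of $a\mapsto a\wedge u$ on $E_+$; this is the very inequality already employed for sums in Theorem \ref{basic remarkasic properties of st convergence}. Whenever both indices satisfy $m,n\in A_u$, the right-hand side lies in $V+V\subseteq U$, and since $U$ is solid the left-hand side lies in $U$ as well. Hence $\lvert x_m-x_n\rvert\wedge u\in U$ for every pair with $m,n\in A_u$.

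Finally, I would translate this into the density statement that defines the $st$-$u_\tau$-Cauchy property. The set of indices for which $\lvert x_m-x_n\rvert\wedge u\notin U$ is contained in the union of the complements of $A_u$ taken in the two coordinates; since each of these complements is density-zero and asymptotic density is subadditive on such sets, the exceptional set is again density-zero. This yields $\mu\big(\{\,\lvert x_m-x_n\rvert\wedge u\notin U\,\}\big)=0$ for every zero neighborhood $U$ and every $u\in E_+$, which is precisely the assertion that $(x_m-x_n)_{(m,n)\in\mathbb{N}\times\mathbb{N}}$ $st$-$u_\tau$-converges to zero.

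I expect the only real obstacle to be the bookkeeping around the double index $(m,n)$: the definition phrases the Cauchy condition through the doubly-indexed family $(x_m-x_n)$, so one must fix a precise meaning of asymptotic density for the doubly-indexed exceptional set (or fix one coordinate and run the argument uniformly in the other) and verify that a union of two density-zero coordinate sets is still density-zero. The remaining ingredients, namely the choice of $V$ with $V+V\subseteq U$, the meet-subadditivity inequality, and the use of solidness of $U$, are all routine transcriptions of arguments already established earlier in the paper.
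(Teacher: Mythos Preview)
Your proposal is correct and matches the spirit of the paper, which in fact omits the proof entirely, stating only that it ``follows from the basic definitions and results.'' Your plan---choose $V\in\mathcal{N}$ with $V+V\subseteq U$, use the inequality $\lvert x_m-x_n\rvert\wedge u\leq\lvert x_m-x\rvert\wedge u+\lvert x_n-x\rvert\wedge u$ together with solidness of $U$, and conclude via subadditivity of density on the complement sets---is precisely the routine argument the paper has in mind, being a direct transcription of the mechanism already used in Theorem~\ref{basic remarkasic properties of st convergence}$(ii)$ and Theorem~\ref{st are continuous}. The caveat you raise about the double-index density is a genuine ambiguity in the paper's own Definition of $st$-$u_\tau$-Cauchy (which writes $\mu\big(\{n\in\mathbb{N}:\ldots\}\big)$ for a doubly-indexed family without specifying the density notion on $\mathbb{N}\times\mathbb{N}$), not a gap in your argument.
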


\begin{thm}\label{order continuous and stu}
Let $(E,\tau)$ be a locally solid vector lattice and $(x_n)$ be a sequence in $E$. Then $E$ is order $st$-$u_\tau$-continuous iff $x_n\downarrow 0$ implies $x_n\xrightarrow{st-u_\tau}0$ in $E$.	
\end{thm}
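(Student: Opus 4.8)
The plan is to prove the equivalence by splitting into the two implications, where the forward one is essentially immediate and the reverse one carries all the content.

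For the direction $(\Rightarrow)$, suppose $E$ is order $st$-$u_\tau$-continuous and take any sequence with $x_n\downarrow 0$. A decreasing sequence with infimum $0$ is order convergent to $0$: taking the dominating net in the definition of order convergence to be $(x_n)$ itself, one has $\lvert x_n-0\rvert=x_n\downarrow 0$, so $x_n\xrightarrow{o}0$. Order $st$-$u_\tau$-continuity then gives $x_n\xrightarrow{st-u_\tau}0$ directly, and this half needs no further work.

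For the direction $(\Leftarrow)$, assume that $x_n\downarrow 0$ implies $x_n\xrightarrow{st-u_\tau}0$, and let $(x_n)$ be an arbitrary order null sequence, $x_n\xrightarrow{o}0$. By the definition of order convergence there is a decreasing net witnessing the convergence, and the first step is to produce from it a decreasing sequence $(y_n)$ with $y_n\downarrow 0$ and $\lvert x_n\rvert\le y_n$ for every $n$. Granting such a $(y_n)$, the hypothesis applies to it, so $y_n\xrightarrow{st-u_\tau}0$; that is, for every zero neighborhood $U$ and every $u\in E_+$ we have $\mu\big(\{n\in\mathbb{N}:y_n\wedge u\notin U\}\big)=0$. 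The second step then transfers this to $(x_n)$ by domination: fixing $U\in\mathcal{N}$ and $u\in E_+$, from $\lvert x_n\rvert\le y_n$ we get $\lvert x_n\rvert\wedge u\le y_n\wedge u$, and the solidity observation recorded in Section~\ref{S1} (if $a\le b$ and $a\notin U$ then $b\notin U$) yields the inclusion $\{n:\lvert x_n\rvert\wedge u\notin U\}\subseteq\{n:y_n\wedge u\notin U\}$. Since asymptotic density is monotone with respect to inclusion, the left-hand set has density $0$ as well; as $U$ and $u$ were arbitrary, this is exactly $x_n\xrightarrow{st-u_\tau}0$, completing the reverse implication.

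The delicate point is the first step of the reverse direction. Statistical $u_\tau$-convergence is a genuinely sequential notion, so to invoke the hypothesis one must replace the dominating net arising from $x_n\xrightarrow{o}0$ by a dominating \emph{decreasing sequence}. I expect this extraction — rather than the routine solidity-and-density bookkeeping of the second step — to be the main obstacle, since in a vector lattice that is not order complete the net cannot in general be reduced to such a sequence; this is the place where one must be careful about precisely which formulation of order convergence for sequences is being used.
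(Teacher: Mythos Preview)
Your proposal is correct and follows essentially the same route as the paper: the forward direction is immediate since $x_n\downarrow 0$ is a special case of $x_n\xrightarrow{o}0$, and for the converse both you and the paper take a dominating decreasing sequence from the definition of order convergence, apply the hypothesis to it, and then transfer the density-zero conclusion to $(x_n)$ via solidity and monotonicity of asymptotic density. Your caution about extracting a dominating \emph{sequence} from the witnessing net is well taken, but the paper simply invokes the sequential form of its order-convergence definition (so the dominating object is already a sequence $(z_n)\downarrow 0$ with $\lvert x_n\rvert\le z_n$ eventually), thereby sidestepping the issue you flag rather than resolving it in greater generality.
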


\begin{proof}
The implication is obvious, so we show the converse direction. Suppose $x_n\downarrow 0$ implies $x_n\xrightarrow{st-u_\tau}0$ in $E$. Take $x_n\xrightarrow{o}0$ in $E$. We show $x_n\xrightarrow{st-u_\tau}0$. By the order convergence of $(x_n)$, there is another sequence $(z_n)\downarrow 0$ in $E$ such that, for each $n$, there exists $n_k\in \mathbb{N}$ so that $\lvert x_n\rvert\leq z_n$ for all $n\geq n_k$. Thus, we have $z_n\xrightarrow{st-u_\tau}0$ because of $z_n\downarrow 0$. Therefore, for arbitrary zero neighborhood $U$, we have $\mu\big(\{n\in\mathbb{N}:(\lvert z_n\rvert\wedge u)\in U\}\big)=1$ for all $u\in E_+$. Since $U$ is solid and $\lvert x_n\rvert\leq z_n$ for all $n\geq n_k$, we have $(\lvert x_n\rvert\wedge u)\in U$ for every $n\geq n_k$ and $u\in E_+$. As a result, $\mu\big(\{n\in\mathbb{N}:(\lvert x_n\rvert\wedge u)\in U\}\big)=1$ for arbitrary $U$ and for all $u\in E_+$.
\end{proof}

In the case of $st$-$u_\tau$-complete locally solid vector lattice, we have the following result.
\begin{thm}\label{op-contchar}
For an $st$-$u_\tau$-complete locally solid vector lattice $(E,\tau)$, the following statements are equivalent:
\begin{enumerate}
\item[(i)] $E$ is order $st$-$u_\tau$-continuous;
\item[(ii)] if $0\leq x_n\uparrow\leq x$ holds in $E$ then $(x_n)$ is an $st$-$u_\tau$-Cauchy sequence;
\item[(iii)] $x_n\downarrow 0$ in $E$ implies $x_n\xrightarrow{st-u_\tau}0$.	
\end{enumerate}	
\end{thm}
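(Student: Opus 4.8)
The strategy is to lean on the results already proved rather than to re-derive everything, and to let the standing $st$-$u_\tau$-completeness of $E$ do the work of turning Cauchy sequences back into convergent ones. The first observation is that (i) $\Leftrightarrow$ (iii) is not new: it is exactly the content of Theorem \ref{order continuous and stu}, which characterises order $st$-$u_\tau$-continuity by the requirement that $x_n\downarrow 0$ forces $x_n\xrightarrow{st-u_\tau}0$. Consequently the entire theorem collapses to establishing the single equivalence (ii) $\Leftrightarrow$ (iii), and I would present it as two implications.

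For (ii) $\Rightarrow$ (iii), suppose $x_n\downarrow 0$ and put $y_n:=x_1-x_n$. Then $0\leq y_n\uparrow\leq x_1$, so (ii) tells us that $(y_n)$ is an $st$-$u_\tau$-Cauchy sequence; by $st$-$u_\tau$-completeness it $st$-$u_\tau$-converges to some $y\in E$. Because $(y_n)$ is increasing, Proposition \ref{motone and st implies order} upgrades this to $y_n\uparrow y$, so that $y=\sup_n y_n=\sup_n(x_1-x_n)=x_1-\inf_n x_n=x_1$, the last equality using $x_n\downarrow 0$. Finally Theorem \ref{basic remarkasic properties of st convergence}(ii) gives $x_n=x_1-y_n\xrightarrow{st-u_\tau}x_1-x_1=0$, which is (iii).

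For (iii) $\Rightarrow$ (ii), take $0\leq x_n\uparrow\leq x$ and suppose the supremum $x_0:=\sup_n x_n$ exists. Then $x_0-x_n\downarrow 0$, so hypothesis (iii) yields $x_0-x_n\xrightarrow{st-u_\tau}0$, whence $x_n\xrightarrow{st-u_\tau}x_0$ by Theorem \ref{basic remarkasic properties of st convergence}(ii); the preceding proposition that every $st$-$u_\tau$-convergent sequence is $st$-$u_\tau$-Cauchy then shows that $(x_n)$ is $st$-$u_\tau$-Cauchy, as required.

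The main obstacle is precisely the assumption ``$x_0:=\sup_n x_n$ exists'' in the last paragraph. In an order complete space it is automatic and the argument above is complete; but the standing assumption on $E$ is only that it is Archimedean, and a bounded increasing sequence in an Archimedean space need not possess a supremum, so the dominating sequence $x_0-x_n\downarrow 0$ that feeds into (iii) is not freely available. The genuine content of (iii) $\Rightarrow$ (ii) therefore lies in producing a decreasing-to-zero majorant for the two-parameter family $(\lvert x_m-x_n\rvert\wedge u)$ directly from the boundedness of $(x_n)$ and hypothesis (iii), and in checking that the resulting bad index pairs form a set of asymptotic density zero in $\mathbb{N}\times\mathbb{N}$; I expect the interplay between this missing supremum and the $st$-$u_\tau$-completeness of $E$ to be the delicate point, and it is where I would concentrate the effort (or, failing that, where an order completeness hypothesis would have to be invoked).
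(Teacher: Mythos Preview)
Your reduction via Theorem~\ref{order continuous and stu} and your argument for (ii)~$\Rightarrow$~(iii) are correct; the latter is essentially the paper's proof of the same implication (the paper fixes an arbitrary index $n_0$ where you take $n_0=1$, but the mechanics are identical: build the bounded increasing sequence $x_{n_0}-x_n$, invoke (ii), use $st$-$u_\tau$-completeness and Proposition~\ref{motone and st implies order}, then identify the limit as $0$).

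The gap you flag in (iii)~$\Rightarrow$~(ii) is genuine, and your diagnosis is exactly right: without order completeness you cannot manufacture $x_0=\sup_n x_n$ inside $E$, so the sequence $x_0-x_n\downarrow 0$ on which you want to run (iii) simply need not exist. The paper does not repair this implication directly. Instead it closes the cycle along the other edge, proving (i)~$\Rightarrow$~(ii), and the missing ingredient is supplied by an external result, \cite[Lem.~4.8]{ABPO}: given $0\le x_n\uparrow\le x$ in a Riesz space, there is another sequence $(y_k)$ such that the doubly indexed family $(y_k-x_n)_{(k,n)}$ decreases to $0$. This is precisely the ``decreasing-to-zero majorant'' you were looking for, obtained without assuming the supremum of $(x_n)$ lies in $E$. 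One then feeds $(y_k-x_n)\downarrow 0$ into (i) (equivalently (iii)) to get $(y_k-x_n)\xrightarrow{st-u_\tau}0$, and the usual $V+V\subseteq U$ splitting of $\lvert x_n-x_m\rvert\wedge u\le \lvert x_n-y_k\rvert\wedge u+\lvert y_k-x_m\rvert\wedge u$ yields the $st$-$u_\tau$-Cauchy property for $(x_n)$. So the ``delicate point'' you anticipated is resolved not by $st$-$u_\tau$-completeness but by importing this lattice-theoretic lemma.
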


\begin{proof} 
$(i)\Rightarrow(ii)$: Let $(x_n)$ be a positive increasing sequence in $E_+$. Then, by using \cite[Lem.4.8.]{ABPO}, we have another sequence $E$ such that $(y_k-x_n)_{(k,n)\in\mathbb{N}\times\mathbb{N}}\downarrow 0$. Thus, it follow from assumption that we have $(y_k-x_n)_{(k,n)\in\mathbb{N}\times\mathbb{N}}\xrightarrow{st-u_\tau} 0$. Then, for any zero neighborhood $U$, we have  $\mu\big(\{n\in\mathbb{N}:(\lvert y_k-x_n\rvert\wedge u)\in U\}\big)=1$ for all $u\in E_+$ and for every $k,n\in \mathbb{N}$. By properties of $\mathcal{N}$, there is another zero neighborhood $V$ such that $V+V\subseteq U$. So, by using the inequality $\lvert x_n-x_m\rvert\wedge u\leq \lvert x_n-y_k\rvert\wedge u+\lvert y_k-x_m\rvert\wedge u\in V+V\subseteq U$, we have $\lvert x_n-x_m\rvert\wedge u\in U$. As a result, we get $\mu\big(\{n\in\mathbb{N}:(\lvert x_n-x_m\rvert\wedge u)\in U\}\big)=1$ for all $u\in E_+$. It means that $(x_n)$ is an $st$-$u_\tau$-Cauchy sequence. 
	
$(ii)\Rightarrow(iii)$: Assume that $x_n\downarrow 0$ is a sequence in $E$. Let's fix an arbitrary index $n_0$. Then, we have $x_n\leq x_{n_0}$ whenever $n\geq n_0$. So, we get $0\leq(x_{n_0}-x_n)_{n\geq n_0}\uparrow\leq x_{n_0}$. Thus, we can apply the condition $(ii)$, and so the sequence $(x_{n_0}-x_n)_{n\geq n_0}$ is $st$-$u_\tau$-Cauchy, i.e., $(x_n-x_{n'})_{(n,n')\in\mathbb{N}\times\mathbb{N}}\xrightarrow{st-u_\tau}0$ as $n_0\leq n,n'\to \infty$. Now, it follows from the $st$-$u_\tau$-completeness of $E$ that there exists an element $x\in E$ such that $x_n\xrightarrow{st-u_\tau}x$ as $n_0\leq n\to\infty$. By Proposition \ref{motone and st implies order}, we have $x_\alpha\downarrow x$, and so it is clear $x=0$. As a result, we get $x_n\xrightarrow{st-u_\tau}0$.
	
$(iii)\Rightarrow(i)$: It is just the implication of Theorem \ref{order continuous and stu}.
\end{proof}

\begin{thm}
Let $(E,\tau)$ be an $st$-$u_\tau$-continuous and $st$-$u_\tau$-complete locally solid vector lattice. Then $E$ is order complete. 
\end{thm}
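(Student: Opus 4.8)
The plan is to invoke the sequential criterion for order completeness recorded in Section~\ref{S1}, namely that a vector lattice is order complete precisely when every increasing sequence bounded above admits a supremum. Accordingly, I would start with an arbitrary sequence $(x_n)$ satisfying $0\leq x_n\uparrow\leq x$ for some $x\in E$, and aim to produce $\sup x_n$ explicitly as an $st$-$u_\tau$-limit.

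First I would apply Theorem~\ref{op-contchar}. Since $E$ is assumed order $st$-$u_\tau$-continuous, the implication $(i)\Rightarrow(ii)$ shows that the bounded increasing sequence $(x_n)$ is $st$-$u_\tau$-Cauchy. Next, because $E$ is $st$-$u_\tau$-complete, every $st$-$u_\tau$-Cauchy sequence is $st$-$u_\tau$-convergent; hence there exists some $y\in E$ with $x_n\xrightarrow{st-u_\tau}y$.

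It then remains to identify $y$ with the supremum of $(x_n)$, and this is exactly where Proposition~\ref{motone and st implies order} does the work: the sequence $(x_n)$ is monotone (increasing) and $st$-$u_\tau$-converges to $y$, so it order converges to $y$ in the monotone sense, i.e.\ $x_n\uparrow y$. In particular $y=\sup x_n$ exists in $E$. Since $(x_n)$ was an arbitrary bounded increasing sequence, the stated criterion yields that $E$ is order complete.

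I do not expect a genuine obstacle here, since the argument is essentially an assembly of the preceding results; the only point requiring care is to make sure the hypotheses line up. Specifically, one must check that the term ``$st$-$u_\tau$-continuous'' in the statement is the order $st$-$u_\tau$-continuity used to trigger $(i)\Rightarrow(ii)$ in Theorem~\ref{op-contchar}, and that the monotonicity needed for Proposition~\ref{motone and st implies order} is supplied automatically by the increasing sequence furnished by the order completeness criterion, rather than having to be engineered separately.
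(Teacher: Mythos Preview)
Your proposal is correct and follows essentially the same route as the paper: take a sequence $0\le x_n\uparrow\le x$, use the implication $(i)\Rightarrow(ii)$ of Theorem~\ref{op-contchar} (available since $E$ is order $st$-$u_\tau$-continuous and $st$-$u_\tau$-complete) to see that $(x_n)$ is $st$-$u_\tau$-Cauchy, invoke $st$-$u_\tau$-completeness to obtain an $st$-$u_\tau$-limit, and then apply Proposition~\ref{motone and st implies order} to conclude $x_n\uparrow y=\sup x_n$. Your terminological caveat is also on point: ``$st$-$u_\tau$-continuous'' in the statement is indeed meant as the order $st$-$u_\tau$-continuity defined earlier.
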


\begin{proof}
Let $(x_n)$ be a positive, increasing and bounded sequence by a vector $e\in E_+$. Then by applying Theorem \ref{op-contchar}$(ii)$, we see that $(x_n)$ is an $st$-$u_\tau$-Cauchy sequence. Then there is $x\in E$ such that $x_n\xrightarrow{st-u_\tau}x$ because $E$ is $st$-$u_\tau$-complete. It follows from Proposition \ref{motone and st implies order} that $x_n\uparrow x$, and so $E$ is order complete.
\end{proof}

\end{document}